\newtheorem{thm}{Theorem}[section]%[chapter] theorem number will %continue
\newtheorem{lem}[thm]{Lemma}
\newtheorem{cor}[thm]{Corollary}
\newtheorem{prop}[thm]{Proposition}
\newtheorem{con}[thm]{Conjecture}
\theoremstyle{definition}
\newtheorem{defn}[thm]{Definition}
\newtheorem{rem}[thm]{Remark}
\numberwithin{equation}{thm}
\def\N{\mathscr N}
\def\ggg{\mathfrak{g}}
\def\sss{\mathfrak{s}}
\def\lll{\mathfrak{l}}
\def\Aut {{\rm Aut\,}}
\def\max{{\rm max}}
\def\ad{{\rm ad}}
\def\id{{\textbf{id}}}
\begin{document}

\title[Borel subalgebras of the Witt algebra $W_1$]{Borel subalgebras of the Witt algebra $W_1$}

\author{Yu-Feng Yao and Hao Chang}

\address{Department of Mathematics, Shanghai Maritime University,
 Shanghai, 201306, China.}\email{yfyao@shmtu.edu.cn}

\address{Department of Mathematics, East China Normal University,
 Shanghai, 200241, China.} \email{hchang@ecnu.cn}

\subjclass[2010]{17B05, 17B08, 17B50}

\keywords{Witt algebra, Borel subalgebra, nilpotent element, nilpotent cone, automorphism group}

\thanks{This work is supported by the National Natural Science Foundation of China (Grant Nos. 11201293 and 11271130),
the Innovation Program of Shanghai Municipal Education Commission (Grant Nos. 13YZ077 and 12ZZ038), and the Fund of ECNU and SMU for Overseas Studies.}

\begin{abstract}
Let $\mathbb{F}$ be an algebraically closed field of characteristic $p>3$, and $\ggg$ the $p$-dimensional Witt algebra over $\mathbb{F}$.
Let $\N$ be the nilpotent cone of $\ggg$. Explicit description of $\N$ is given, so that the conjugacy classes of Borel subalgebras of
$\ggg$ under the automorphism group are determined. In contrast with only one conjugacy class of Borel subalgebras in a classical simple Lie
algebra, there are two conjugacy classes of Borel subalgebras in $\ggg$. The representatives of conjugacy classes of Borel subalgebras, i.e., the so-called
standard Borel subalgebras, are precisely given.
\end{abstract}

\maketitle

\section{Introduction}
As is well-known to all, Borel subalgebras play an important role in the structure and representation theory of a Lie algebra.
Let $\mathfrak{L}$ be a finite-dimensional simple Lie algebra over an algebraically closed
field of characteristic $0$. The famous structure theorem (cf. \cite{Hu}) asserts that there is only one conjugacy class of Borel subalgebras under the automorphism gourp of $\mathfrak{L}$. The same result is also true for a classical simple Lie algebra over an algebraically closed field of prime characteristic with
some mild restriction on the characteristic $p$. The classification theorem for finite-dimensional simple Lie algebras over an algebraically closed field of prime characteristic $p>5$ implies that each finite-dimensional simple Lie algebra is either of classical type or of Cartan type (cf. \cite{BW, PS}). In this paper, we initiate the study of Borel subalgebras in the simple Lie algebras of Cartan type. We completely determine the conjugacy classes of Borel subalgebras in the Witt algebra, which is the ``simplest" Lie algebra of Cartan type.

Let $\ggg=W_1$ be the Witt algebra which was found by
Witt as the first example of non-classical simple Lie algebra in 1930s. As is known to all, $\ggg$
is a restricted Lie algebra, and has a natural $\mathbb{Z}$-grading $\ggg=\sum_{i=-1}^{p-2}\ggg_{[i]}$. Associated with this grading, one has
a filtration $(\ggg_i)_{i\geq -1}$ with $\ggg_i=\sum_{j\geq i}\ggg_{[j]}$ for $i\geq -1$. Let ${\N}=\{x\in\ggg\mid x^{[p]}=0\}$ be the nilpotent
cone of $\ggg$, which is a closed subvariety in $\ggg$. A. Premet extensively studied $\N$ in \cite{Pr}, where he gave
more general results on the nilpotent cone and the Chevalley Restriction Theorem for the Jacobson-Witt algebra. Quite recently,
Premet's results were set up in the four classes of Lie algebras of Cartan type in \cite{BFS}. Precise decomposition of $\N$ into distinct nilpotent orbits under the automorphism group of $\ggg$ was given in \cite{YS}. In contrast with finitely many nilpotent orbits in a classical simple Lie algebra (cf. \cite{Jan-1}), there are infinitely many nilpotent orbits in the Witt algebra. Moreover, M. Mygind \cite{My} provided a complete picture of the orbit closures in the Witt algebra and its dual space, extending the results in \cite{YS}.

This paper is structured as follows. After recalling some basic definitions and results on the Witt algebra, we explicitly describe nilpotent
elements not contained in $\ggg_0$ in section 3. We give a sufficient and necessary condition for a nilpotent element not contained in $\ggg_0$. This
detailed description on nilpotent elements helps us to give a complete classification of conjugacy classes of Borel subalgebras of $\ggg$ in the final
section. Our main result asserts that there are two conjugacy classes of Borel subalgebras in total. The representatives and dimensions are precisely given.

\section{Preliminaries}
In this paper, we always assume that the ground field $\mathbb{F}$ is algebraically closed, and of characteristic $p>3$. Let
${\mathfrak{A}}={\mathbb{F}}[x]/(x^p)$ be the truncated polynomial algebra of one indeterminate, where $(x^p)$ denotes the ideal of ${\mathbb{F}}[x]$
generated by $x^p$. For brevity, we also denote by $x$ the coset of $x$ in $\mathfrak{A}$. There is a canonical basis $\{1,x,\cdots, x^{p-1}\}$ in
$\mathfrak{A}$. Let $D$ be the linear operator on $\mathfrak{A}$ subject to the rule $Dx^i=ix^{i-1}$ for $0\leq i\leq p-1$. Denote by $W_1$ the derivation
algebra of $\mathfrak{A}$. In the following, we always assume $\ggg=W_1$ unless otherwise stated. By \cite[\S\,4.2]{SF},
$\ggg=\text{span}_{\mathbb{F}}\{x^iD\mid 0\leq i\leq p-1\}$. There is a natural $\mathbb{Z}$-grading on $\ggg$, i.e., $\ggg=\sum_{i=-1}^{p-2}\ggg_{[i]}$,
where $\ggg_{[i]}={\mathbb{F}}x^{i+1}D,\,-1\leq i\leq p-2$. Associated with this grading, one has the following natural filtration:
$$\ggg=\ggg_{-1}\supset\ggg_0\supset\cdots\supset\ggg_{p-2}\supset 0,$$
where $$\ggg_i=\sum\limits_{j\geq i}\ggg_{[j]}, \,-1\leq i\leq p-2.$$
This filtration is preserved under the action of the automorphism group $G$ of $\ggg$ (cf. \cite{Ch, Re, Wi}). Furthermore, $\ggg$ is a restricted
Lie algebra with the $[p]$-mapping defined as the $p$-th power as usual derivations. Precisely speaking,
$$(x^iD)^{[p]}=
\begin{cases}
0, &\text{if}\,\,i\neq 1, \cr xD, &\text{if}\,\, i=1.
\end{cases}$$
We need the following result on the automorphism group of $\ggg$.

\begin{lem}(cf. \cite{Ch, Wi}, see also \cite[Theorem 12.8]{Re})\label{lem-1}
Let $\ggg=W_1$ be the Witt algebra over $\mathbb{F}$ and $G=\Aut(\ggg)$. Then the following statements hold.
\begin{itemize}
\item[(i)] $G$ is a connected algebraic group of dimension $p-1$.
\item[(ii)] $\Aut{\mathfrak{A}}\cong G$, the correspondence is given by sending any $\phi\in\Aut{\mathfrak{A}}$ to
$\tilde{\phi}\in G$, where $\tilde{\phi}$ is defined via $\tilde{\phi}({\mathscr{D}})=\phi\circ {\mathscr{D}}\circ \phi^{-1}$, $\forall \,\mathscr{D}\in\ggg$.
\item[(iii)] $G$ can be decomposed as $G={\mathbb{F}}^{\times}\ltimes{\mathbb{U}}$, where ${\mathbb{F}}^{\times}$ is the multiplicative group, and
$\mathbb{U}$ is the unipotent radical of $G$. More precisely, any element in $G$ is of the form $\tilde\varphi$, where $\varphi\in\Aut {\mathfrak{A}}$
is given as
$$\varphi(x)=\sum\limits_{i=1}^{p-1} a_ix^i,\,\,a_i\in {\mathbb{F}},\,\,i=1,\cdots, p-1,\,\,\text{and}\,\,\,a_1\neq 0.$$
Moreover, $\tilde{\varphi}\in {\mathbb{F}}^{\times}$ if and only if $a_i=0$ for $i=2,\cdots, p-1$. And $\tilde{\varphi}\in {\mathbb{U}}$
if and only if $a_1=1$.
\end{itemize}
\end{lem}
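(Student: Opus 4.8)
The plan is to reduce everything to an explicit analysis of $\Aut\mathfrak{A}$ and then transport it to $\ggg$ along $\phi\mapsto\tilde\phi$; once the isomorphism in (ii) is in hand, (i) and (iii) follow at once. I would begin with $\Aut\mathfrak{A}$. Since $\mathfrak{A}=\mathbb{F}[x]/(x^p)$ is local with maximal ideal $\mathfrak{m}=(x)$ and $\mathfrak{m}^k$ is spanned by $x^k,\dots,x^{p-1}$, any $\phi\in\Aut\mathfrak{A}$ preserves every $\mathfrak{m}^k$ and is determined by $\phi(x)\in\mathfrak{m}\setminus\mathfrak{m}^2$, so $\phi(x)=\sum_{i=1}^{p-1}a_ix^i$ with $a_1\neq0$; conversely every such choice gives an automorphism, since $\phi(x)^p\in\mathfrak{m}^p=0$ makes it well defined and $a_1\neq0$ makes the induced maps on the layers $\mathfrak{m}^k/\mathfrak{m}^{k+1}$ bijective, hence $\phi$ bijective. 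Thus, as a variety, $\Aut\mathfrak{A}$ is the principal open set $\{a_1\neq0\}\subset\mathbb{A}^{p-1}$ — irreducible, hence connected, of dimension $p-1$ — with polynomial composition law, so it is a connected algebraic group of dimension $p-1$. The leading coefficient $\phi\mapsto a_1$ is a homomorphism onto $\mathbb{F}^\times$ (the degree-one part of a composite is the product of the degree-one parts); its kernel $\mathbb{U}=\{a_1=1\}$, filtered by the conditions $a_2=\dots=a_k=0$, is an iterated extension of copies of $\mathbb{G}_a$, hence a connected unipotent group, and $\{x\mapsto cx\}\cong\mathbb{F}^\times$ is a complement; so $\Aut\mathfrak{A}=\mathbb{F}^\times\ltimes\mathbb{U}$ with $\mathbb{U}$ its unipotent radical.

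Next I would check that $\phi\mapsto\tilde\phi$ is an injective morphism of algebraic groups: conjugating a derivation by an algebra automorphism again yields a derivation and respects the bracket, so $\tilde\phi\in\Aut\ggg$, and if $\tilde\phi=\id$ then $\phi$ commutes with $D$, so $\phi\circ D=D\circ\phi$ evaluated at $x$ gives $1=D(\phi(x))=a_1+2a_2x+\dots+(p-1)a_{p-1}x^{p-2}$, forcing $a_1=1$ and $a_2=\dots=a_{p-1}=0$ because $2,\dots,p-1$ are units in $\mathbb{F}$; thus $\phi=\id$.

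The substantive part is surjectivity, and this is where I expect the real obstacle. Let $\Theta\in\Aut\ggg$. The crucial — and hard — fact is that $\Theta$ preserves the canonical filtration $(\ggg_i)_{i\ge-1}$ (the Demushkin--Wilson theorem). This is delicate because $\ggg=W_1$ has maximal subalgebras not conjugate to $\ggg_0$ (for instance the copy of $\mathfrak{sl}_2$ spanned by $D$, $xD$, $x^2D$), so one cannot simply invoke uniqueness of a maximal subalgebra; instead one must pin $\ggg_0$ down intrinsically — e.g. for $p>3$ as the unique subalgebra of codimension one — and then the rest of the filtration comes for free (for example $\ggg_1=[\ggg_0,\ggg_0]$). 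Granting this, the remaining steps are routine. Since automorphisms commute with the $[p]$-operation and $(xD)^{[p]}=xD$, the element $\Theta(xD)\in\Theta(\ggg_0)=\ggg_0$ is toral, so $\mathbb{F}\,\Theta(xD)$ is a maximal torus of the solvable algebra $\ggg_0$ and is conjugate to $\mathbb{F}\,xD$ by an inner automorphism $\exp(\ad y)$ with $y\in\ggg_1$ — which is itself of the form $\tilde\phi$. Composing, we may assume $\Theta(xD)=\nu\,xD$; then $\Theta$ carries the $\ad(xD)$-eigenspace $\ggg_{[i]}$ to $\ggg_{[\nu^{-1}i]}$, and compatibility with $\ggg_0=\bigoplus_{i\ge0}\ggg_{[i]}$ forces $\nu=1$, so $\Theta$ respects the $\mathbb{Z}$-grading. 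Writing $\Theta(x^{i+1}D)=\mu_ix^{i+1}D$ with $\mu_i\neq0$ and plugging into $[D,x^{k+1}D]=(k+1)x^kD$ for $0\le k\le p-2$ gives $\mu_{-1}\mu_k=\mu_{k-1}$, hence $\mu_k=d^{\,k}$ with $d:=\mu_{-1}^{-1}$; that is, $\Theta$ is the automorphism induced by $x\mapsto dx$. Undoing the inner automorphism places the original $\Theta$ in the image, which proves (ii).

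Finally, (i) and (iii) are obtained by carrying the description of $\Aut\mathfrak{A}$ above through this isomorphism: $G$ is connected of dimension $p-1$, $G=\mathbb{F}^\times\ltimes\mathbb{U}$, a general element is $\tilde\varphi$ with $\varphi(x)=\sum_{i=1}^{p-1}a_ix^i$ and $a_1\neq0$, and the criteria $\tilde\varphi\in\mathbb{F}^\times\iff a_2=\dots=a_{p-1}=0$ and $\tilde\varphi\in\mathbb{U}\iff a_1=1$ are exactly those read off in the first paragraph. To reiterate, the one hard input is the filtration invariance of an arbitrary automorphism of $W_1$; everything else is bracket bookkeeping.
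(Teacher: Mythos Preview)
The paper does not supply a proof of this lemma; it is stated with citations to Chang, Wilson, and Ree, and immediately followed by a remark. So there is no ``paper's proof'' to compare against.

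Your reconstruction is essentially the classical argument found in those references and is correct in outline and in detail. You have isolated the one genuinely nontrivial input, namely that every automorphism of $W_1$ preserves the canonical filtration (equivalently, for $p>3$, that $\ggg_0$ is the unique subalgebra of codimension one); the paper itself also invokes this fact by citation just before the lemma. Everything else---the parametrization of $\Aut\mathfrak{A}$ by $\{(a_1,\dots,a_{p-1}):a_1\neq0\}$, injectivity of $\phi\mapsto\tilde\phi$ via commutation with $D$, conjugating the toral element $\Theta(xD)$ into $\mathbb{F}\,xD$ by Winter's theorem inside the solvable restricted algebra $\ggg_0$, the resulting constraint $\nu=1$, and the recursion $\mu_{-1}\mu_k=\mu_{k-1}$ forcing $\mu_k=d^k$---is routine and you have carried it out cleanly. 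One small point worth making explicit: the truncated exponentials $\exp(y)$ for $y\in\ggg_1$ really are algebra automorphisms of $\mathfrak{A}$ (the cross-terms $D^j(a)D^l(b)$ with $j+l\ge p$ vanish on degree grounds), so that $\exp(\ad y)=\widetilde{\exp(y)}$ indeed lies in the image of $\Aut\mathfrak{A}\to G$, which is what you need to undo the conjugation at the end.
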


\begin{rem}
Lemma \ref{lem-1} is not valid for $p=3$. In fact, when $p=3$, the Witt algebra $W_1\cong\sss\lll_2$, and $\Aut(\sss\lll_2)$
has dimension $3$.
\end{rem}

\section{Description of nilpotent elements in the Witt algebra}

Keep in mind that $\ggg=W_1$ is the Witt algebra over $\mathbb{F}$. An element in $\ggg$ is called nilpotent if it is a nilpotent
operator on $\mathfrak{A}$. Set $\N=\{x\in\ggg\mid x^{[p]}=0\}$. Then $\N$ is just the set of all nilpotent elements in $\ggg$. In the
literature, $\N$ is usually called the nilpotent cone, which is a closed subvariety in $\ggg$. Then nilpotent cone $\N$ was extensively
studied by Premet in \cite{Pr}. The following result is due to Premet.

\begin{lem}(cf. \cite[Theorem 2 and Lemma 4]{Pr} or \cite[Lemma 3.1]{YS})\label{lem-2}
Keep notations as above, then the following statements hold.
\begin{itemize}
\item[(i)] The orbit $G\cdot D$ is open and dense in $\N$. Moreover, it coincides with $(\ggg\setminus\ggg_{0})\cap \N$.
\item[(ii)] We have decomposition $\N=G\cdot D\cup\ggg_1$.
\end{itemize}
\end{lem}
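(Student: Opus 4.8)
The plan is to work inside $\mathfrak A=\mathbb F[x]/(x^p)$, writing each element of $\ggg$ as $fD$ with $f\in\mathfrak A$, and to recall that $fD\notin\ggg_0$ precisely when $f(0)\neq 0$, i.e. $f$ is a unit of $\mathfrak A$, and that by Lemma~\ref{lem-1} an automorphism of $\ggg$ has the form $\tilde\psi$ with $\psi\in\Aut\mathfrak A$ and $\tilde\psi(fD)=\psi\circ fD\circ\psi^{-1}$. The crucial claim is that \emph{for a unit $f$, the element $fD$ lies in $\N$ if and only if the coefficient of $x^{p-1}$ in $f^{-1}$ is zero, and in that case $fD\in G\cdot D$.} One direction is immediate: $D^{[p]}=0$ and $G$ preserves $\N$, so $G\cdot D\subseteq\N$. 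For the other, I would use that $\N$ is exactly the set of nilpotent operators on $\mathfrak A$. A nonzero nilpotent operator $T$ satisfies $\im T\cap\ker T\neq 0$; since $f$ is a unit, $\ker(fD)=\ker D=\mathbb F\!\cdot\!1$ and $\im(fD)=f\cdot\im D=f\cdot\mathrm{span}\{1,x,\dots,x^{p-2}\}$, so nilpotency of $fD$ forces $1\in\im(fD)$, i.e. $f^{-1}\in\mathrm{span}\{1,\dots,x^{p-2}\}$. Conversely, under that condition one may solve $D(u)=f^{-1}$ in $\mathfrak A$; then $u$ has nonzero linear coefficient, hence generates $\mathfrak A$, so there is $\psi\in\Aut\mathfrak A$ with $\psi(x)=u$, and $\bigl(\psi^{-1}\circ fD\circ\psi\bigr)(x)=\psi^{-1}\bigl(fD(u)\bigr)=\psi^{-1}(1)=1$, whence $\widetilde{\psi^{-1}}(fD)=D$ and $fD\in G\cdot D$. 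This proves $(\ggg\setminus\ggg_0)\cap\N=G\cdot D$; openness of $G\cdot D$ in $\N$ is then immediate because $\ggg\setminus\ggg_0$ is open in $\ggg$.

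Next I would determine $\ggg_0\cap\N$. For $f\in\mathfrak m:=(x)$ with $f=c_1x+\cdots$ and $c_1\neq 0$, the operator $fD$ preserves the $\mathfrak m$-adic filtration of $\mathfrak A$ and induces multiplication by $kc_1$ on $\mathfrak m^k/\mathfrak m^{k+1}$, so its eigenvalues on $\mathfrak A$ are $0,c_1,2c_1,\dots,(p-1)c_1$, not all zero; hence $fD\notin\N$. Thus $\ggg_0\cap\N\subseteq\{fD:f\in\mathfrak m^2\}=\ggg_1$, and the reverse inclusion holds since $f\in\mathfrak m^2$ makes $fD$ raise $\mathfrak m$-adic order, so $(fD)^p(x)\in\mathfrak m^{p+1}=0$ and $(fD)^{[p]}=0$. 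Therefore $\ggg_0\cap\N=\ggg_1$, and combining with the previous step gives part (ii): $\N=G\cdot D\cup\ggg_1$.

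Finally, for density I would note that the residue condition above, after clearing the denominator, is the vanishing of one polynomial: writing $f=\sum_{i=0}^{p-1}f_ix^i$ and $\ggg\cong\mathbb A^p$ accordingly, set $\Phi:=f_0^{\,p}\cdot(\text{coefficient of }x^{p-1}\text{ in }f^{-1})\in\mathbb F[f_0,\dots,f_{p-1}]$. One checks that $\Phi$ is homogeneous of degree $p-1$ and that its restriction to $\{f_0=0\}$ equals $\pm f_1^{\,p-1}$, so $\Phi\neq 0$ and $V(\Phi)$ is a hypersurface, while by the crucial claim $V(\Phi)\cap\{f_0\neq 0\}=G\cdot D$ and $V(\Phi)\cap\{f_0=0\}=\{f_0=f_1=0\}=\ggg_1$ as sets. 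Every irreducible component of the hypersurface $V(\Phi)$ has dimension $p-1$, whereas $\dim\ggg_1=p-2$; hence no component lies in $\ggg_1$, each meets $G\cdot D$ in a dense open subset, and so $\ggg_1\subseteq\overline{G\cdot D}$. With part (ii) this yields $\N=G\cdot D\cup\ggg_1\subseteq\overline{G\cdot D}\subseteq\N$, i.e. $\overline{G\cdot D}=\N$.

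The step I expect to be the main obstacle is the crucial claim — in particular, spotting that conjugacy of $fD$ to $D$ is controlled by whether $f^{-1}$ is a derivative (equivalently, by its $x^{p-1}$-coefficient) and justifying the change of variables $x\mapsto u$ inside $\mathfrak A$. The other technical point is confirming that this residue is genuinely polynomial in the coefficients of $f$ and computing $\Phi|_{f_0=0}$, which is what drives the dimension count for density. (As a sanity check on density: for $t\neq 0$ the element $(x+t)^2D$ lies in $G\cdot D$, since $(x+t)^{-2}=-\frac{d}{dx}(x+t)^{-1}$ has no $x^{p-1}$-term, and $(x+t)^2D\to x^2D$ as $t\to 0$.)
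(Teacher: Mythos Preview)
The paper does not supply its own proof of this lemma; it is quoted from the literature (Premet and Yao--Shu) and used as a black box. So there is nothing to compare against directly. That said, your argument is correct and entirely self-contained, and in fact gives a cleaner route to the statement than what the paper later develops for Proposition~\ref{prop-1}: rather than parametrising $G\cdot D$ by the recursively defined $l_i$'s, you identify the single invariant controlling membership in $G\cdot D$ --- the vanishing of the $x^{p-1}$-coefficient of $f^{-1}$, i.e.\ the condition that $f^{-1}$ be a derivative in $\mathfrak A$ --- and then exhibit the conjugating automorphism explicitly as the change of variable $x\mapsto \int f^{-1}$. This is the heart of the matter and you have it right.

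Two small points worth tightening. First, when you solve $Du=f^{-1}$ you should say that you take the antiderivative with $u(0)=0$, so that $u\in\mathfrak m$ and Lemma~\ref{lem-1}(iii) applies to give $\psi\in\Aut\mathfrak A$ with $\psi(x)=u$; you use this implicitly. Second, in the density step the polynomial $\Phi$ is a priori only defined on $\{f_0\neq 0\}$; to make ``$\Phi|_{f_0=0}=\pm f_1^{\,p-1}$'' rigorous you should observe that in the geometric-series expansion $f^{-1}=f_0^{-1}\sum_{k\ge 0}(-f_0^{-1}(f-f_0))^k$ the only contribution to the $x^{p-1}$-coefficient carrying the maximal power $f_0^{-p}$ is the $k=p-1$ term, whose $x^{p-1}$-coefficient is $(-1)^{p-1}f_0^{-p}f_1^{\,p-1}=f_0^{-p}f_1^{\,p-1}$; this simultaneously shows $\Phi$ is a genuine polynomial and computes its restriction. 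With these clarifications the hypersurface/dimension argument for density goes through exactly as you wrote it. Your closing sanity check with $(x+t)^2D\to x^2D$ is a nice independent confirmation.
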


We are now in the position to give one of the main results describing nilpotent elements in the Witt algebra.

\begin{prop}\label{prop-1}
Let $\ggg$ be the Witt algebra with the nilpotent cone $\N$ defined as above. Then
$$D+\sum\limits_{i=0}^{p-2}k_i\,x^{i+1}D\in \N$$
if and only if the following
identity holds
\begin{equation}\label{identity-1}
k_{p-2}=\sum\limits_{i=0}^{p-3}2(i+1)k_il_{p-2-i},
\end{equation}
where $l_i$'s $(1\leq i\leq p-2)$ are defined inductively as follows.
\begin{equation}\label{identity-2}
l_1=\frac{k_0}{2},
\end{equation}
\begin{equation}\label{identity-3}
l_i=\frac{1}{i+1}\big(ik_{i-1}+\sum\limits_{j=0}^{i-2}(2j+1-i)k_jl_{i-1-j}\big),\,\,2\leq i\leq p-2.
\end{equation}
\end{prop}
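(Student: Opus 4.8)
The plan is to work directly with the conjugation action of the unipotent group $\mathbb{U}$ on the element $y = D + \sum_{i=0}^{p-2} k_i\, x^{i+1}D$, exploiting the criterion in Lemma~\ref{lem-2}: such a $y$ (which lies in $\ggg \setminus \ggg_0$) is nilpotent if and only if it is $G$-conjugate to $D$ itself. Since $\tilde\varphi(\mathscr{D}) = \varphi \circ \mathscr{D} \circ \varphi^{-1}$, conjugating $D$ by an automorphism $\tilde\varphi$ with $\varphi(x) = \sum_{j\ge 1} b_j x^j$ produces $\tilde\varphi(D) = \varphi'(\varphi^{-1}(x))^{-1}\, \partial$ in the usual ``change of coordinates'' form; more usefully, $\tilde\varphi^{-1}(y) = D$ is equivalent to a first-order differential equation for $\varphi$. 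First I would set up this equation precisely: writing $y = f(x) D$ with $f(x) = 1 + \sum_{i=0}^{p-2} k_i x^{i+1}$, the condition $\tilde\varphi^{-1}(y) = D$ becomes $\varphi'(x) = f(\varphi(x))$, or equivalently, after inverting, the condition that the formal antiderivative of $1/f$ exists in $\mathfrak{A}$. The obstruction to solving this in the truncated algebra $\mathfrak{A} = \mathbb{F}[x]/(x^p)$ is exactly that the coefficient of $x^{p-1}$ in $1/f$ — the ``residue'' — must vanish, since $x^{p-1}$ is the one monomial with no antiderivative in $\mathfrak{A}$.

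So the core of the argument is a computation: expand $1/f(x) = \sum_{m\ge 0} c_m x^m$ as a power series (truncated mod $x^p$) and show that the vanishing of $c_{p-1}$ is precisely identity~\eqref{identity-1}. To make the bookkeeping match the stated $l_i$'s, I would instead work with a substitution. Introduce a new variable via $t = \varphi^{-1}(x)$ and track the coefficients of $\varphi$; alternatively, and more cleanly, I would guess that the $l_i$ are (up to normalization) the Taylor coefficients of $\varphi^{-1}$ or of $\log$-type generating function attached to $f$. Concretely, I expect that setting $\varphi(x) = x + \sum_{i\ge 1} l_i x^{i+1}$ and imposing $\varphi'(x) = f(\varphi(x))$ and comparing coefficients of $x^i$ for $i = 1, \dots, p-2$ yields exactly the recursions~\eqref{identity-2} and~\eqref{identity-3}: the coefficient of $x^0$ gives $1=1$, the coefficient of $x^1$ gives $2l_1 = k_0$, i.e.~\eqref{identity-2}, and the coefficient of $x^{i-1}$ for $2 \le i \le p-2$ gives $(i+1)l_i = i k_{i-1} + \sum_{j=0}^{i-2}(2j+1-i) k_j l_{i-1-j}$, i.e.~\eqref{identity-3}. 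The one remaining equation — the coefficient of $x^{p-2}$, which would ``define $l_{p-1}$'' but cannot because $\varphi$ has no $x^p$ term — is the genuine constraint, and after simplification it should read $k_{p-2} = \sum_{i=0}^{p-3} 2(i+1) k_i l_{p-2-i}$, which is~\eqref{identity-1}.

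The steps, in order, are: (1) reduce ``$y \in \N$'' to ``$y$ is $G$-conjugate to $D$'' using Lemma~\ref{lem-2}(i) together with the observation that $y \notin \ggg_0$; (2) translate $G$-conjugacy of $D$ and $y = fD$ into the functional equation $\varphi'(x) = f(\varphi(x))$ for some $\varphi(x) = x + \sum_{i\ge 1} l_i x^{i+1}$ representing an element of $\Aut\mathfrak{A}$, using Lemma~\ref{lem-1}(ii),(iii) (the leading coefficient is $1$ since the $\ggg_{[-1]}$-component of $y$ is the same as that of $D$, so we need $\tilde\varphi \in \mathbb{U}$); (3) solve this equation order by order, showing coefficients $x^0$ through $x^{p-3}$ are solved by~\eqref{identity-2}–\eqref{identity-3} with the $l_i$ uniquely determined, and no obstruction arises; (4) show the coefficient of $x^{p-2}$ is solvable if and only if~\eqref{identity-1} holds (since a putative $l_{p-1}$ would sit in front of $x^p = 0$), and conversely when~\eqref{identity-1} holds the resulting $\varphi$ genuinely lies in $\Aut\mathfrak{A}$, so $y \in G\cdot D \subseteq \N$. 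The main obstacle is step~(3)–(4): carefully deriving the index shifts and the coefficient $(2j+1-i)$ in the recursion from $\varphi'(x) = f(\varphi(x))$ — i.e.\ expanding $f(\varphi(x)) = 1 + \sum_i k_i \varphi(x)^{i+1}$ and reorganizing — is a delicate but routine power-series manipulation, and one must be attentive that all divisions (by $i+1$ for $i \le p-2$, and the factor $2$) are by units in $\mathbb{F}$, which holds since $p > 3$. It is worth remarking that this is exactly where the hypothesis $p>3$ enters, matching the Remark after Lemma~\ref{lem-1}.
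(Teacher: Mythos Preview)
Your overall strategy---reduce nilpotency to $G$-conjugacy with $D$ via Lemma~\ref{lem-2}, then translate this into a functional equation for an automorphism $\varphi$ and read off the obstruction at top degree---is sound, and it is genuinely different from the paper's argument. The paper never writes down $\varphi$ or the ODE $\varphi' = f(\varphi)$; instead it takes $\sigma\in\mathbb{U}$ with $\sigma(D)=X$, sets $Y=\sigma(xD)=xD+\sum_{j\ge 1} l_j\,x^{j+1}D$, and derives the recursions purely from the Lie-algebraic identity $[X,Y]=\sigma([D,xD])=\sigma(D)=X$ by matching coefficients of $x^{t+1}D$. For the converse the paper builds $Y$ from the $l_i$, checks $[X,Y]=X$, and then invokes a normal-form result of Demu\v{s}kin to push $X$ to $D+c\,x^{p-1}D$ and force $c=0$.

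However, your plan has a concrete gap: the $l_i$ you introduce---the Taylor coefficients of $\varphi$ in $\varphi(x)=x+\sum_{i\ge 1} l_i x^{i+1}$---are \emph{not} the $l_i$ of the proposition, and the recursion coming out of $\varphi'(x)=f(\varphi(x))$ does \emph{not} coincide with \eqref{identity-2}--\eqref{identity-3}. Already at the $x^2$-coefficient your equation gives $3l_2 = k_1 + k_0 l_1$, whereas \eqref{identity-3} with $i=2$ reads $3l_2 = 2k_1 - k_0 l_1$; these disagree for generic $k_0,k_1$. The reason is that the paper's $l_j$ are the coefficients of $\sigma(xD)$, which in your language is $\dfrac{\psi(x)}{\psi'(x)}\,D$ for the automorphism $\psi$ with $\psi'(x)=1/f(x)$---a different object from $\varphi$ (indeed $\varphi=\psi^{-1}$). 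So while your ODE approach would yield \emph{a} valid single polynomial obstruction to nilpotency, it is not the one stated, and you would still owe a non-trivial translation to reach the specific identity \eqref{identity-1} with the specific $l_i$ of \eqref{identity-2}--\eqref{identity-3}. (There is also a small index slip: in $\varphi'=f(\varphi)$ the coefficients of $x^1,\dots,x^{p-2}$ successively determine $l_1,\dots,l_{p-2}$, and the obstruction sits at the $x^{p-1}$-coefficient, not $x^{p-2}$.)
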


\begin{proof}
(i) Suppose $X=D+\sum_{i=0}^{p-2}k_i\,x^{i+1}D\in \N$, then $X\in G\cdot D$ by Lemma \ref{lem-2}(ii),
i.e., there exists $\sigma\in G$ such that $X=\sigma(D)$. More precisely, $\sigma\in\mathbb{U}$, since $\sigma(D)-D\in\ggg_0$.
Let $Y=\sigma(xD)$. We can write down $Y$ as follows
$$Y=xD+l_1\,x^2D+l_2\,x^3D+\cdots+l_{p-2}\,x^{p-1}D.$$
It is easy to check that
\begin{eqnarray}\label{identity-4}
[X,Y]&=&[D+\sum\limits_{i=0}^{p-2}k_i\,x^{i+1}D,\,xD+\sum\limits_{j=1}^{p-2}l_j\,x^{j+1}D_j]\nonumber\\
&=& D+2l_1\,xD+\sum\limits_{t=1}^{p-3}\big((t+2)l_{t+1}+\sum\limits_{s=0}^{t-1}(t-2s)k_sl_{t-s}-tk_t\big)x^{t+1}D\\
&&+\big(2k_{p-2}+\sum\limits_{i=1}^{p-3}2(i+1)k_{p-2-i}l_i+(p-2)k_0l_{p-2}\big)x^{p-1}D.\nonumber
\end{eqnarray}

On the other hand,
\begin{eqnarray}\label{identity-5}
[X,Y]&=&[\sigma(D),\,\sigma(xD)]=\sigma([D,\,xD])=\sigma(D)=
X=D+\sum\limits_{i=0}^{p-2}k_i\,x^{i+1}D.
\end{eqnarray}

Comparing (\ref{identity-4}) with (\ref{identity-5}), we get the following relation
\begin{eqnarray}\label{identity-6}
2l_1=k_0.
\end{eqnarray}
\begin{eqnarray}\label{identity-7}
(t+2)l_{t+1}+\sum\limits_{s=0}^{t-1}(t-2s)k_sl_{t-s}-tk_t=k_t,\,\,1\leq t\leq p-3.
\end{eqnarray}
\begin{eqnarray}\label{identity-8}
2k_{p-2}+\sum\limits_{s=1}^{p-3}2(s+1)k_{p-2-s}l_s+(p-2)k_0l_{p-2}=k_{p-2}.
\end{eqnarray}
By (\ref{identity-8}), we have
\begin{eqnarray*}
k_{p-2}&=&-(p-2)k_0l_{p-2}-\sum\limits_{s=1}^{p-3}2(s+1)k_{p-2-s}l_s\\
&=&-(p-2)k_0l_{p-2}+\sum\limits_{s=1}^{p-3}2(p-1-s)k_{p-2-s}l_s\\
&=&\sum\limits_{i=0}^{p-3}2(i+1)k_il_{p-2-i}
\end{eqnarray*}
where $l_i$'s $(1\leq i\leq p-2)$ are defined by (\ref{identity-6}) and
(\ref{identity-7}). It is easy to check that (\ref{identity-6})-(\ref{identity-7}) are equivalent to (\ref{identity-2})-(\ref{identity-3}).

(ii) Let $$X=D+\sum\limits_{i=1}^{p-2}k_i\,x^{i+1}D.$$ Suppose (\ref{identity-1}) holds. We need to show that $X\in\N$. For that, set
$$Y=xD+\sum\limits_{i=1}^{p-2}l_i\,x^{i+1}D$$
with $l_i$'s defined by (\ref{identity-2}), (\ref{identity-3}). Following the same arguments as in part (i), it is a routine to check that
$[X,Y]=X$. According to \cite{De}, there exists $\sigma\in\mathbb{U}$ such that $X^{\prime}:=\sigma(X)=D+cx^{p-1}D$ for some $c\in\mathbb{F}$.
Let $Y^{\prime}:=\sigma(Y)=xD+\sum\limits_{i=1}^{p-2}l_i^{\prime}\,x^{i+1}D$. Then
$$[X^{\prime}, Y^{\prime}]=[\sigma(X), \sigma(Y)]=\sigma([X,Y])=\sigma(X)=X^{\prime}.$$
By the same arguments as in part (i), we get a similar relation as (\ref{identity-1}) on the coefficients $\{0,0,\cdots, 0, c\}$
in the expression of $X^{\prime}$ as a linear span of $\{D, xD,\cdots, x^{p-1}D\}$. This forces $c=0$. Hence $\sigma(X)=D$, i.e.,
$X=\sigma^{-1}(D)\in\N$, as desired.
\end{proof}

For any $(k_0, k_1,\cdots, k_{p-3})\in {\mathbb{F}}^{p-2}$, we define $(k_1^{\prime},\cdots, k_{p-2}^{\prime})=
(l_1,\cdots,l_{p-2})\in {\mathbb{F}}^{p-2}$ by
(\ref{identity-2})-(\ref{identity-3}). Thanks to Lemma \ref{lem-2} and Proposition \ref{prop-1}, we get the following explicit
description of nilpotent elements not contained in $\ggg_0$.

\begin{thm}\label{thm-1}
Let $\ggg$ be the Witt algebra, and $G$ the automorphism group of $\ggg$. Then
$$G\cdot D=\{aD+\sum\limits_{i=0}^{p-2}a^{-i}\,k_i\,x^{i+1}D\mid a\in {\mathbb{F}}\setminus\{0\},\, (k_0,\cdots,k_{p-3})\in
{\mathbb{F}}^{p-2}, k_{p-2}=\sum\limits_{i=0}^{p-3}2(i+1)k_ik_{p-2-i}^{\prime}\}.$$
\end{thm}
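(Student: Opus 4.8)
The plan is to combine Lemma~\ref{lem-2}, which tells us that $\N\setminus\ggg_0 = G\cdot D$, with Proposition~\ref{prop-1}, which describes exactly which elements of the form $D+\sum_{i=0}^{p-2}k_i\,x^{i+1}D$ are nilpotent, and then to ``spread out'' this description over the whole orbit using the action of the torus $\mathbb{F}^\times \subset G$. The key structural observation is that every element of $G\cdot D$ has a nonzero $\ggg_{[-1]}$-component: indeed if $\sigma\in\mathbb{U}$ then $\sigma(D)-D\in\ggg_0$, so the coefficient of $D$ in $\sigma(D)$ is $1$; more generally, writing a general $g\in G$ as $t\cdot u$ with $t\in\mathbb{F}^\times$, $u\in\mathbb{U}$ via Lemma~\ref{lem-1}(iii), one gets that $g\cdot D$ has $D$-coefficient equal to the scaling parameter of $t$, which is nonzero. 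Conversely any nilpotent element with nonzero $\ggg_{[-1]}$-component lies in $\ggg\setminus\ggg_0$, hence in $G\cdot D$ by Lemma~\ref{lem-2}(i).

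First I would make the torus action completely explicit. For $a\in\mathbb{F}^\times$, let $\varphi_a\in\Aut\mathfrak{A}$ be $\varphi_a(x)=ax$, and let $t_a=\tilde\varphi_a\in\mathbb{F}^\times\subset G$ be the corresponding automorphism, so $t_a(\mathscr{D}) = \varphi_a\circ\mathscr{D}\circ\varphi_a^{-1}$. A direct computation gives $t_a(x^{i+1}D) = a^{-i}\,x^{i+1}D$ for each $i$ (one checks $\varphi_a\circ x^{i+1}D\circ\varphi_a^{-1}$ acts on $x^j$ as $a^{-i}\,x^{i+1}D$ does; in particular $t_a(D)=aD$). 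Hence if $X_0 := D+\sum_{i=0}^{p-2}k_i\,x^{i+1}D$ is an arbitrary element of $G\cdot D$ with unit $\ggg_{[-1]}$-coefficient (necessarily nilpotent), then $t_a(X_0) = aD + \sum_{i=0}^{p-2} a^{-i}k_i\,x^{i+1}D$ is again in $G\cdot D$, and as $(a,k_0,\dots,k_{p-3})$ ranges over $\mathbb{F}^\times\times\mathbb{F}^{p-2}$ (with $k_{p-2}$ forced by~(\ref{identity-1})) we sweep out exactly the claimed set.

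The two inclusions are then straightforward. For $\supseteq$: given $a\in\mathbb{F}^\times$ and $(k_0,\dots,k_{p-3})\in\mathbb{F}^{p-2}$, define $k_{p-2}$ by~(\ref{identity-1}) (equivalently $k_{p-2}=\sum_{i=0}^{p-3}2(i+1)k_ik'_{p-2-i}$ in the theorem's notation, since $k'_j=l_j$); by Proposition~\ref{prop-1}(ii) the element $D+\sum_{i=0}^{p-2}k_i\,x^{i+1}D$ lies in $\N$, and since it has nonzero $\ggg_{[-1]}$-component it lies in $\N\setminus\ggg_0 = G\cdot D$; applying $t_a$ keeps us in $G\cdot D$ and produces exactly $aD+\sum_{i=0}^{p-2}a^{-i}k_i\,x^{i+1}D$. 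For $\subseteq$: take any $Z\in G\cdot D$. By Lemma~\ref{lem-2}(i), $Z\notin\ggg_0$, so $Z = aD + \sum_{i=0}^{p-2}b_i\,x^{i+1}D$ with $a\neq 0$. Then $t_{a}^{-1}(Z) = t_{1/a}(Z) = D + \sum_{i=0}^{p-2} a^{i}b_i\,x^{i+1}D$ is still in $G\cdot D\subseteq\N$, so Proposition~\ref{prop-1}(i) applies to it with $k_i := a^ib_i$: the relation~(\ref{identity-1}) holds for the $k_i$, which rearranges to $b_i = a^{-i}k_i$ and the required identity $k_{p-2}=\sum_{i=0}^{p-3}2(i+1)k_ik'_{p-2-i}$, exhibiting $Z$ as a member of the claimed set.

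I expect the only real point requiring care — the ``main obstacle,'' though it is more bookkeeping than genuine difficulty — is verifying the scaling formula $t_a(x^{i+1}D)=a^{-i}x^{i+1}D$ and being consistent about conventions (the exponent $-i$ versus $i$, and the fact that under $\varphi$ the vector field $D=\partial_x$ transforms with a factor $\varphi'(x)^{-1}$). One must also double-check that $k'_j$ as defined just before the theorem coincides with $l_j$ from Proposition~\ref{prop-1}, so that the identity $k_{p-2}=\sum 2(i+1)k_ik'_{p-2-i}$ in the theorem statement is literally~(\ref{identity-1}); this is immediate from the definition but worth a sentence. Everything else is a direct transport of Proposition~\ref{prop-1} along the torus, using only that $G\cdot D$ is $G$-stable and that elements outside $\ggg_0$ are automatically in $G\cdot D$.
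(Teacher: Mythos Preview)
Your proposal is correct and follows essentially the same route as the paper: normalize the $D$-coefficient to $1$ via the torus, then invoke Proposition~\ref{prop-1} for both inclusions, using Lemma~\ref{lem-2}(i) to identify $G\cdot D$ with $\N\setminus\ggg_0$. One bookkeeping slip (precisely the point you flagged): with $\varphi_a(x)=ax$ one actually gets $t_a(x^{i+1}D)=a^{\,i}\,x^{i+1}D$, so $t_a(D)=a^{-1}D$; thus your $t_a$ is the paper's $\sigma$ rather than $\sigma^{-1}$, and the roles of $t_a$ and $t_a^{-1}$ in your two inclusions should be swapped.
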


\begin{proof}
(i) Let $X\in G\cdot D$, then we can write
$$X=aD+\sum\limits_{i=0}^{p-2}b_i\,x^{i+1}D,\,\, a, b_i\in {\mathbb{F}}, i=0,\cdots, p-2,\,\,\text{and}\,\,a\neq 0.$$
Take $\sigma\in G$ such that
$$\sigma(x^iD)=a^{i-1}\,x^iD,\,\,0\leq i\leq p-1.$$
Then
\begin{equation}\label{a equality}
\sigma(X)=D+\sum\limits_{i=0}^{p-2}k_i\,x^{i+1}D,
\end{equation}
where $k_i=a^i\,b_i,\,0\leq i\leq p-2$. Since $\sigma(X)\in G\cdot D\subset\N$, we get by Proposition \ref{prop-1}
$$k_{p-2}=\sum\limits_{i=0}^{p-3}2(i+1)k_ik_{p-2-i}^{\prime}.$$
It follows from (\ref{a equality}) that
\begin{eqnarray*}
X&=&\sigma^{-1}\big(D+\sum\limits_{i=0}^{p-2}k_i\,x^{i+1}D\big)\\
&=&aD+\sum\limits_{i=0}^{p-2}a^{-i}\,k_i\,x^{i+1}D.
\end{eqnarray*}
Hence,
$$G\cdot D\subseteq\{aD+\sum\limits_{i=0}^{p-2}a^{-i}\,k_i\,x^{i+1}D\mid a\in {\mathbb{F}}\setminus\{0\},\, (k_0,\cdots,k_{p-3})\in
{\mathbb{F}}^{p-2},k_{p-2}=\sum\limits_{i=0}^{p-3}2(i+1)k_ik_{p-2-i}^{\prime}\}.$$

(ii) Let
$X=aD+\sum\limits_{i=0}^{p-2}a^{-i}\,k_i\,x^{i+1}D\in\ggg$
with
$$a\in {\mathbb{F}}\setminus\{0\},\, (k_0,\cdots,k_{p-3})\in
{\mathbb{F}}^{p-2}$$
and
$$k_{p-2}=\sum\limits_{i=0}^{p-3}2(i+1)k_ik_{p-2-i}^{\prime}.$$
Let $\sigma\in G$ such that
$$\sigma(x^iD)=a^{i-1}\,x^iD,\,\,0\leq i\leq p-1.$$
Then
$$\sigma(X)=D+\sum\limits_{i=0}^{p-2}k_i\,x^{i+1}D.$$
By Lemma \ref{lem-2} and Proposition \ref{prop-1}, $\sigma(X)\in G\cdot D$, so that
$X\in G\cdot D$. Hence,
$$\{aD+\sum\limits_{i=0}^{p-2}a^{-i}\,k_i\,x^{i+1}D\mid a\in {\mathbb{F}}\setminus\{0\},\, (k_0,\cdots,k_{p-3})\in
{\mathbb{F}}^{p-2},k_{p-2}=\sum\limits_{i=0}^{p-3}2(i+1)k_ik_{p-2-i}^{\prime}\}\subseteq G\cdot D.$$

In conclusion, combining (i) with (ii), we get
$$G\cdot D=\{aD+\sum\limits_{i=0}^{p-2}a^{-i}\,k_i\,x^{i+1}D\mid a\in {\mathbb{F}}\setminus\{0\},\, (k_0,\cdots,k_{p-3})\in
{\mathbb{F}}^{p-2},k_{p-2}=\sum\limits_{i=0}^{p-3}2(i+1)k_ik_{p-2-i}^{\prime}\},$$
as desired.
\end{proof}

As a direct consequence, we have

\begin{cor}\label{cor-1}
Keep notations as before. Then the following statements hold.
\begin{itemize}
\item[(i)] The nilpotent orbit $G\cdot D$ has dimension $p-1$.
\item[(ii)] We have the following decomposition for the nilpotent cone.
\begin{eqnarray*}
{\N}&=&\{aD+\sum\limits_{i=0}^{p-2}a^{-i}\,k_i\,x^{i+1}D\mid a\in {\mathbb{F}}\setminus\{0\},\, (k_0,\cdots,k_{p-3})\in
{\mathbb{F}}^{p-2},\,\,\text{and}\\
&&k_{p-2}=\sum\limits_{i=0}^{p-3}2(i+1)k_ik_{p-2-i}^{\prime}\}\cup \ggg_1.
\end{eqnarray*}
\end{itemize}
\end{cor}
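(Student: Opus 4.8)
The plan is to derive both parts as immediate bookkeeping consequences of Theorem \ref{thm-1} and Lemma \ref{lem-2}, with essentially no new argument needed.

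For part (i), I would observe that Theorem \ref{thm-1} exhibits $G\cdot D$ as the image of the parametrization
$$(a,k_0,\dots,k_{p-3})\ \longmapsto\ aD+\sum_{i=0}^{p-2}a^{-i}k_i\,x^{i+1}D,\qquad a\in\mathbb{F}^\times,\ (k_0,\dots,k_{p-3})\in\mathbb{F}^{p-2},$$
where $k_{p-2}$ is the prescribed polynomial in the $k_i$'s and $k_j'$'s (the latter themselves being polynomials in the $k_i$'s via \eqref{identity-2}--\eqref{identity-3}). This map is clearly injective: the coefficient of $D$ recovers $a$, and then the coefficients of $x^{i+1}D$ for $0\le i\le p-3$ recover $k_i=a^i b_i$. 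Hence $G\cdot D$ is the injective image of the irreducible variety $\mathbb{F}^\times\times\mathbb{F}^{p-2}$, which has dimension $p-1$; since the parametrization is a morphism and $\dim G=p-1$ by Lemma \ref{lem-1}(i), while on the other hand the orbit map $G\to G\cdot D$, $g\mapsto g\cdot D$, is dominant onto an irreducible variety of dimension at most $\dim G=p-1$, we conclude $\dim G\cdot D=p-1$. (Alternatively one may simply cite that $G\cdot D$ is open dense in $\N$ by Lemma \ref{lem-2}(i) and that $\dim\N=p-1$ is known, or note the orbit map has finite — indeed trivial — fibers since the stabilizer of $D$ in $G$ is trivial, as a unipotent-by-torus calculation shows.)

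For part (ii), I would combine Lemma \ref{lem-2}(ii), which gives $\N=G\cdot D\cup\ggg_1$, with the explicit description of $G\cdot D$ furnished by Theorem \ref{thm-1}; substituting the latter into the former yields verbatim the claimed decomposition. No computation beyond unwinding notation is involved.

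The only point requiring a moment's care — the ``main obstacle,'' such as it is — is the dimension count in (i): one must be sure the parametrizing map in Theorem \ref{thm-1} is genuinely injective (so that no collapsing occurs) and is a morphism of varieties, so that $\dim G\cdot D$ equals the dimension $p-1$ of the source. Both are transparent from the formula, since $a$ and each $k_i$ ($0\le i\le p-3$) are read off as polynomial functions of the coordinates of the image vector, and the constraint defining $k_{p-2}$ is a single polynomial equation that does not cut down the dimension of the parameter space $\{(a,k_0,\dots,k_{p-3})\}$. Everything else is immediate.
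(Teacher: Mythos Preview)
Your proposal is correct and matches the paper's approach: the paper presents this corollary without proof, simply as ``a direct consequence'' of Theorem \ref{thm-1} and Lemma \ref{lem-2}, and notes in the subsequent remark that (i) was already obtained by Premet. Your explicit dimension count via the injective parametrization and your reduction of (ii) to substituting Theorem \ref{thm-1} into Lemma \ref{lem-2}(ii) are exactly the bookkeeping the paper leaves to the reader.
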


\begin{rem}
Corollary \ref{cor-1} (i) was obtained by Premet in \cite{Pr}.
\end{rem}

Let
$$X=\sum\limits_{i=-1}^{p-2}k_ix^{i+1}D\in\ggg$$
with $k_i\in {\mathbb{F}}$, $-1\leq i\leq p-2$. It is easy to see that the matrix of $X$ relative
to the canonical basis $\{1,x,\cdots, x^{p-1}\}$ of $\mathfrak{A}$ is

\[
A=\left(\begin{array}{ccccccccccccc}
0& k_{-1} & 0 & \cdots & \cdots & \cdots & 0\\
0& k_0 & 2k_{-1} & \cdots & \cdots & \cdots & 0\\
0& k_1 & 2k_0 & \cdots & \cdots & \cdots & 0\\
\cdots & \cdots & \cdots & \cdots & \cdots & \cdots & \cdots \\
\cdots & \cdots & \cdots & \cdots & \cdots & \cdots & \cdots \\
\cdots & \cdots & \cdots & \cdots & \cdots & \cdots & \cdots \\
0& k_{p-2} & 2k_{p-3} & \cdots & \cdots & \cdots & (p-1)k_0
\end{array}\right)
\]
So, the corresponding characteristic polynomial of $A$ is

$$\aligned
(\clubsuit)\quad\quad\quad\quad |\lambda \id-A|&=\begin{vmatrix}
\lambda & -k_{-1} & 0 & \cdots & \cdots & \cdots & 0\\
0& \lambda-k_0 & -2k_{-1} & \cdots & \cdots & \cdots & 0\\
0& -k_1 & \lambda-2k_0 & \cdots & \cdots & \cdots & 0\\
\cdots & \cdots & \cdots & \cdots & \cdots & \cdots & \cdots \\
\cdots & \cdots & \cdots & \cdots & \cdots & \cdots & \cdots \\
\cdots & \cdots & \cdots & \cdots & \cdots & \cdots & \cdots \\
0& -k_{p-2} & -2k_{p-3} & \cdots & \cdots & \cdots &
\lambda-(p-1)k_0
\end{vmatrix}\cr
&= \lambda^p+ f(k_{-1},k_0,\cdots, k_{p-2})\lambda\cr
\endaligned$$
where the expression of the right hand side of $(\clubsuit)$ follows from \cite[Corollary 1]{Pr}, $\id$ represents the
identity transformation on $\mathfrak{A}$, and
$$\aligned
f(k_{-1},k_0,\cdots, k_{p-2}) &= \begin{vmatrix}
k_0 & 2k_{-1} & 0 &\cdots & \cdots & \cdots & 0\\
k_1 & 2k_0 & 3k_{-1} &\cdots & \cdots & \cdots & 0\\
k_2 & 2k_1 & 3k_0 &\cdots & \cdots & \cdots & 0\\
\cdots & \cdots & \cdots & \cdots & \cdots & \cdots & \cdots \\
\cdots & \cdots & \cdots & \cdots & \cdots & \cdots & \cdots \\
\cdots & \cdots & \cdots & \cdots & \cdots & \cdots & \cdots \\
k_{p-2} & 2k_{p-3} & 3k_{p-4} & \cdots & \cdots & \cdots & (p-1)k_0
\end{vmatrix}.\cr
\endaligned$$
Hence,
$$X=\sum\limits_{i=-1}^{p-2}k_ix^{i+1}D\in\N$$
if and only if $f(k_{-1},k_0,\cdots, k_{p-2})=0$.

According to \cite[Theorem 2]{Pr-2} and the proof of \cite[Corollary 6.10]{BFS}, there exists a homogeneous polynomial
$\psi_0\in {\mathbb{F}}[k_{-1},k_0,\cdots,k_{p-2}]$ with $\deg\psi_0=p-1$ such that $X^{[p]}=\psi_0 X$ for any
$X=\sum_{i=-1}^{p-2}k_ix^{i+1}D\in\ggg$. Hence,
$X=\sum_{i=-1}^{p-2}k_ix^{i+1}D\in\N$
if and only if $\psi_0=0$. Moreover, if $X\not\in \N$, then $g(\lambda):=\lambda^p-\psi_0\lambda$ is a minimal polynomial of $X=\sum_{i=-1}^{p-2}k_ix^{i+1}D$ as a transformation on $\mathfrak{A}$. Since $(\clubsuit)$ is a characteristic polynomial of $X$, it follows that
$g(\lambda)\mid \big(\lambda^p+f(k_{-1},k_0,\cdots, k_{p-2})\lambda\big)$. Thus, $g(\lambda)= \lambda^p+f(k_{-1},k_0,\cdots, k_{p-2})\lambda$,
i.e., $\psi_0=-f(k_{-1},k_0,\cdots, k_{p-2})$. In conclusion, $X^{[p]}=-f(k_{-1},k_0,\cdots, k_{p-2})X$ for any $X=\sum_{i=-1}^{p-2}k_ix^{i+1}D\in\ggg$.

As a direct consequence of Corollary \ref{cor-1}, we have
\begin{cor}
Keep notations as above, then
$$\aligned
 \begin{vmatrix}
k_0 & 2k_{-1} & 0 &\cdots & \cdots & \cdots & 0\\
k_1 & 2k_0 & 3k_{-1} &\cdots & \cdots & \cdots & 0\\
k_2 & 2k_1 & 3k_0 &\cdots & \cdots & \cdots & 0\\
\cdots & \cdots & \cdots & \cdots & \cdots & \cdots & \cdots \\
\cdots & \cdots & \cdots & \cdots & \cdots & \cdots & \cdots \\
\cdots & \cdots & \cdots & \cdots & \cdots & \cdots & \cdots \\
k_{p-2} & 2k_{p-3} & 3k_{p-4} & \cdots & \cdots & \cdots & (p-1)k_0
\end{vmatrix}&=0\cr
\endaligned$$
if and only if one of the following two cases occurs
\begin{itemize}
\item[(i)] $k_{-1}=k_0=0$.
\item[(ii)] $k_{-1}\neq 0$ and there exists some $(\kappa_0,\cdots, \kappa_{p-3})\in{\mathbb{F}}^{p-2}$ such that $k_i=k_{-1}^{-i}\kappa_i$ for
$0\leq i\leq p-3$ and $k_{p-2}=k_{-1}^{2-p}\sum\limits_{i=0}^{p-3}2(i+1)\kappa_i \kappa_{p-2-i}^{\prime}$.
\end{itemize}
\end{cor}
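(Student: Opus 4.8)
The plan is to derive this corollary directly from Corollary~\ref{cor-1}, using the established fact that $X=\sum_{i=-1}^{p-2}k_ix^{i+1}D\in\N$ if and only if the displayed determinant $f(k_{-1},k_0,\ldots,k_{p-2})$ vanishes. So the statement to prove is really: $X\in\N$ if and only if (i) or (ii) holds. First I would invoke Lemma~\ref{lem-2}, which gives the dichotomy $\N=G\cdot D\cup\ggg_1$, and treat the two cases according to whether $k_{-1}=0$ or $k_{-1}\neq 0$.

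If $k_{-1}=0$, then $X\in\ggg_0$, and $X\in\N$ forces $X\in\ggg_1$ (since $(G\cdot D)\cap\ggg_0=\emptyset$ by Lemma~\ref{lem-2}(i)); conversely $\ggg_1\subseteq\N$. Now $X\in\ggg_1$ exactly means $k_0=0$ as well, since $\ggg_1=\mathrm{span}\{x^2D,\ldots,x^{p-1}D\}$ while $\ggg_0=\mathrm{span}\{xD,x^2D,\ldots\}$. This is precisely condition~(i). (One should note that the statement $k_{-1}=k_0=0$ automatically makes the determinant zero by direct inspection of its first column, which is consistent.)

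If $k_{-1}\neq 0$, then $X\notin\ggg_0\supseteq\ggg_1$, so $X\in\N$ iff $X\in G\cdot D$. Here I would apply Corollary~\ref{cor-1}(ii) (or equivalently Theorem~\ref{thm-1}) with $a:=k_{-1}$. The description there says $X=aD+\sum_{i=0}^{p-2}a^{-i}\kappa_i x^{i+1}D\in G\cdot D$ for $(\kappa_0,\ldots,\kappa_{p-3})\in\mathbb{F}^{p-2}$ and $\kappa_{p-2}=\sum_{i=0}^{p-3}2(i+1)\kappa_i\kappa_{p-2-i}'$, where the $\kappa_j'$ are produced from $(\kappa_0,\ldots,\kappa_{p-3})$ by the inductive formulas (\ref{identity-2})--(\ref{identity-3}). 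Matching coefficients of $X=\sum_{i=-1}^{p-2}k_ix^{i+1}D$ against this normal form gives $k_i=a^{-i}\kappa_i=k_{-1}^{-i}\kappa_i$ for $0\le i\le p-3$ (so $\kappa_i:=k_{-1}^{i}k_i$ is the right choice, and every such tuple $(\kappa_0,\ldots,\kappa_{p-3})$ is attainable since $k_{-1}\neq 0$), together with $k_{p-2}=k_{-1}^{-(p-2)}\kappa_{p-2}=k_{-1}^{2-p}\sum_{i=0}^{p-3}2(i+1)\kappa_i\kappa_{p-2-i}'$. That is exactly condition~(ii).

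The only point requiring a little care, and the one I would flag as the main obstacle, is keeping the reindexing and the exponents of $a=k_{-1}$ straight: one must check that the automorphism $\sigma$ with $\sigma(x^iD)=a^{i-1}x^iD$ sends $aD$ to $D$ and $a^{-i}\kappa_i x^{i+1}D$ back and forth correctly, and that "$-i$" versus "$i$" in the exponent matches the convention $a^i b_i=k_i$ used in the proof of Theorem~\ref{thm-1}. Once the bookkeeping is pinned down, the corollary is an immediate translation of Corollary~\ref{cor-1} into determinant language, with nothing substantive left to prove.
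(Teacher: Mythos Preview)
Your proposal is correct and follows essentially the same route as the paper: the paper's proof simply records that $X^{[p]}=-f(k_{-1},\ldots,k_{p-2})X$, so the determinant vanishes iff $X\in\N$, and then says the result ``follows directly from Corollary~\ref{cor-1}.'' Your case split on $k_{-1}$ and the matching of coefficients against the parametrization in Theorem~\ref{thm-1} is exactly the unpacking of that ``follows directly,'' with nothing added or missing.
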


\begin{proof}
Since $X^{[p]}=-f(k_{-1},k_0,\cdots, k_{p-2})X$ for any $X=\sum_{i=-1}^{p-2}k_ix^{i+1}D\in\ggg$, it follows that
$$\aligned
f(k_{-1},k_0,\cdots, k_{p-2}) &= \begin{vmatrix}
k_0 & 2k_{-1} & 0 &\cdots & \cdots & \cdots & 0\\
k_1 & 2k_0 & 3k_{-1} &\cdots & \cdots & \cdots & 0\\
k_2 & 2k_1 & 3k_0 &\cdots & \cdots & \cdots & 0\\
\cdots & \cdots & \cdots & \cdots & \cdots & \cdots & \cdots \\
\cdots & \cdots & \cdots & \cdots & \cdots & \cdots & \cdots \\
\cdots & \cdots & \cdots & \cdots & \cdots & \cdots & \cdots \\
k_{p-2} & 2k_{p-3} & 3k_{p-4} & \cdots & \cdots & \cdots & (p-1)k_0
\end{vmatrix}&=0\cr
\endaligned$$
if and only if $$X=\sum_{i=-1}^{p-2}k_ix^{i+1}D\in\N.$$ Now the assertion follows directly from Corollary \ref{cor-1}.
\end{proof}

\section{Borel subalgebras of the Witt algebra}

As is well-known to all, Borel subalgebras play a fundamental role in the structure and representation theory of a
Lie algebra. In this section, we determine the conjugacy
classes of Borel subalgebras in the Witt algebra $\ggg$ over an algebraically closed field of prime characteristic $p>3$.

\begin{defn}
A Borel subalgebra of a Lie algebra is defined to be a maximal solvable subalgebra.
\end{defn}

Recall that $\ggg$ has a $\mathbb{Z}$-grading $\ggg=\sum\limits_{i=-1}^{p-2}\ggg_{[i]}$. Set
$${{\mathscr{B}}^+}=\ggg_0=\sum\limits_{i=0}^{p-2}\ggg_{[i]}=\sum\limits_{i=0}^{p-2}{\mathbb{F}}x^{i+1}D,$$
and
$${{\mathscr{B}}^{-}}=\sum\limits_{i=-1}^{0}\ggg_{[i]}={\mathbb{F}}D\oplus {\mathbb{F}}xD.$$
It is easy to check that ${{\mathscr{B}}^+}$ and ${{\mathscr{B}}^-}$ are Borel subalgebras. Moreover, they are not conjugate to each other.
We call ${{\mathscr{B}}^+}$ and ${{\mathscr{B}}^-}$  standard Borel subalgebras. The following result asserts that there are only two
conjugacy classes of Borel subalgebras under the automorphism group $G$ of $\ggg$. They are represented by ${{\mathscr{B}}^+}$ and ${{\mathscr{B}}^-}$.

\begin{thm}\label{thm-2}
Let $\ggg=W_1$ be the Witt algebra over an algebraically closed field $\mathbb{F}$ of characteristic $p>3$. Then
any Borel subalgebra in $\ggg$ is conjugate to a standard Borel subalgebra under the action of the automorphism group of $\ggg$.
\end{thm}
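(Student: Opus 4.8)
The plan is to start with an arbitrary Borel subalgebra $\mathfrak{b}\subseteq\ggg$ and split into two cases according to whether $\mathfrak{b}\subseteq\ggg_0$ or not. If $\mathfrak{b}\subseteq\ggg_0$, then since $\ggg_0={{\mathscr{B}}^+}$ is itself solvable (its derived series descends through the filtration $\ggg_1\supset\ggg_2\supset\cdots$ and terminates), maximality of $\mathfrak{b}$ forces $\mathfrak{b}=\ggg_0={{\mathscr{B}}^+}$, and we are done. So the substance is the case $\mathfrak{b}\not\subseteq\ggg_0$, i.e. $\mathfrak{b}\cap(\ggg\setminus\ggg_0)\neq\emptyset$.

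In that case I would first argue that $\mathfrak{b}$ contains a nilpotent element $X$ outside $\ggg_0$. The idea: pick any $Z\in\mathfrak{b}\setminus\ggg_0$, so $Z=aD+(\text{higher terms})$ with $a\neq 0$; since $\mathfrak{b}$ is solvable, $[\mathfrak{b},\mathfrak{b}]$ is a nilpotent (hence $[p]$-nilpotent, after passing to the restricted envelope) ideal, and by a Lie--Kolchin / Lie's theorem type argument applied to $\ad$ acting on $\ggg$, or more directly by using that $\dim\mathfrak{b}$ is bounded and examining the action on $\mathfrak{A}$, one locates a nilpotent $X\in\mathfrak{b}$ still projecting nontrivially to $\ggg_{[-1]}$. (Alternatively: the toral elements of $\ggg$ all lie in $\ggg_0$ up to conjugacy, so a Borel not inside any conjugate of $\ggg_0$ must have a non-semisimple, and then a nilpotent, element of degree $-1$.) By Lemma~\ref{lem-2}(i), any such nilpotent $X$ lies in the single open orbit $G\cdot D$, so after replacing $\mathfrak{b}$ by $\sigma(\mathfrak{b})$ for a suitable $\sigma\in G$ we may assume $D\in\mathfrak{b}$.

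Now I would analyze $\mathfrak{b}\ni D$ directly. Decompose $\mathfrak{b}$ using $\ad D$: since $\ad D$ is nilpotent, one cannot diagonalize, but one can still study $\mathfrak{b}\cap\ggg_i$ for the filtration. The key structural point is that $D$ together with $xD$ spans a two-dimensional solvable algebra ${{\mathscr{B}}^-}$ with $[D,xD]=D$, while $D$ together with any $x^{i+1}D$ for $i\geq 1$ generates, via repeated brackets $[D,x^{i+1}D]=(i+1)x^iD$, everything down through $\ggg_{[-1]}$, hence generates a subalgebra containing $\ggg$ itself — so such an element destroys solvability. Concretely: if $\mathfrak{b}$ contains $D$ and some element with a nonzero $\ggg_{[j]}$-component for $j\geq 1$, I would show (by taking iterated brackets with $D$ and extracting the lowest nonzero graded piece) that $\mathfrak{b}$ contains $x^{i}D$ for all small $i$ and eventually fails to be solvable, contradicting maximality-as-solvable. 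This pins down $\mathfrak{b}\subseteq\ggg_{[-1]}\oplus\ggg_{[0]}={{\mathscr{B}}^-}$, and since ${{\mathscr{B}}^-}$ is solvable, maximality gives $\mathfrak{b}={{\mathscr{B}}^-}$.

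The main obstacle I anticipate is the step that produces a \emph{nilpotent} element outside $\ggg_0$ from the mere assumption $\mathfrak{b}\not\subseteq\ggg_0$: one must rule out the possibility that every element of $\mathfrak{b}\setminus\ggg_0$ has a nonzero semisimple part, and handle the interaction between the abstract solvable structure and the $[p]$-operation. I would address this by invoking the restricted structure — working in the restricted enveloping algebra or using that a maximal torus of $\ggg$ is $1$-dimensional and $G$-conjugate into $\ggg_0$ — so that after conjugation the semisimple parts of elements of $\mathfrak{b}$ land in $\ggg_0$; then an element of $\mathfrak{b}\setminus\ggg_0$ has nilpotent part of degree $-1$, and that nilpotent part lies in $\mathfrak{b}$ because $\mathfrak{b}$, being a restricted (after enlarging, if necessary, to its $p$-envelope, which stays solvable) subalgebra, is closed under taking semisimple and nilpotent parts. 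The remaining computations — that $\ggg_0$ is solvable, that ${{\mathscr{B}}^-}$ is solvable and self-normalizing, and the iterated-bracket argument forcing $\mathfrak{b}\subseteq{{\mathscr{B}}^-}$ — are routine and I would present them compactly.
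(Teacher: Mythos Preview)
Your overall architecture --- split on whether $\mathfrak{b}\subseteq\ggg_0$, and in the other case conjugate a degree $-1$ nilpotent to $D$, then use iterated brackets with $D$ to force $\mathfrak{b}\subseteq{\mathscr{B}}^-$ --- matches the paper's, and your endgame (once $D\in\mathfrak{b}$) is exactly the paper's subcase~(i-1): one finds a copy of $\mathfrak{sl}_2$ inside $\mathfrak{b}$ and contradicts solvability.

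The gap is precisely where you flagged it, and your proposed fix does not work. You assert that ``a maximal torus of $\ggg$ is $1$-dimensional and $G$-conjugate into $\ggg_0$'' and that ``the toral elements of $\ggg$ all lie in $\ggg_0$ up to conjugacy.'' This is false: the element $(1+x)D$ is toral (one checks $((1+x)D)^{[p]}=(1+x)D$), lies outside $\ggg_0$, and cannot be conjugated into $\ggg_0$ because $G$ preserves the filtration. Indeed Demu\v{s}kin's classification (which the paper cites as \cite{De}) gives \emph{two} conjugacy classes of Cartan subalgebras in $W_1$, represented by $\mathbb{F}xD$ and $\mathbb{F}(1+x)D$. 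A related point: in $W_1$ Premet's identity $X^{[p]}=\psi_0(X)\,X$ shows every element is already either $[p]$-nilpotent or $[p]$-semisimple, so there is no nontrivial Jordan decomposition to exploit; the hard case is exactly when your chosen $Z\in\mathfrak{b}\setminus\ggg_0$ is itself semisimple, and nothing you wrote disposes of it.

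The paper avoids this by a different case split. Rather than trying to manufacture a nilpotent of degree $-1$ directly, it divides according to whether $B\cap\N$ is zero. If $B\cap\N\neq 0$ and the nilpotent lies in $G\cdot D$, one proceeds as you do. If instead the nilpotent lies in $\ggg_1$, the paper invokes \cite{De} to conjugate some $u'\in B\setminus\ggg_0$ to the normal form $D+c\,x^{p-1}D$ and then produces a non-solvable three-dimensional subalgebra, a contradiction. If $B\cap\N=0$, then $B\cap\ggg_1=0$ forces $\dim B\leq 2$; the one-dimensional case is ruled out by extending along an eigenvector of the (necessarily semisimple) generator, and the two-dimensional case is ruled out by choosing a basis $\{X,Y\}$ with $[X,Y]=X$ and $X\notin\ggg_0$, whereupon the computation of Proposition~\ref{prop-1} shows $X\in\N$, contradicting $B\cap\N=0$. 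That last step --- that the relation $[X,Y]=X$ with $X$ of filtration degree $-1$ already forces $X$ to be nilpotent --- is the substitute for the torus-conjugacy argument you were reaching for, and it is the genuinely nontrivial input your sketch is missing.
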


\begin{proof}
Let $B$ be a Borel subalgebra of $\ggg$. We divide the discussion into two cases.

\textbf{Case 1:} $B\subseteq \ggg_0$.

In this case, $B={\mathscr{B}}^{+}$, since $\ggg_0={\mathscr{B}}^{+}$.

\textbf{Case 2:} $B\nsubseteqq \ggg_0$.

In this case, we claim that $B$ is conjugate to ${\mathscr{B}}^{-}$. For that, we consider the intersection of $B$ with the
nilpotent cone $\N$.

(i) $B\cap {\N}\neq 0$.

In this situation, let $0\neq u\in B\cap {\N}$, then $u\in G\cdot D$ or $u\in\ggg_1$ by Lemma \ref{lem-2}.

(i-1) Suppose $u\in G\cdot D$, then there exists $\sigma\in G$ such that $D\in \sigma(B)$. This implies that $\sigma(B)={\mathscr{B}}^{-}$.
Indeed, if there exists $v\in \sigma(B)\setminus {\mathscr{B}}^{-}$, we can write
$$v=\sum\limits_{i=-1}^{p-2}a_ix^{i+1}D$$
with $a_i\neq 0$ for some $i>0$. Set
$$j=\max\,\{l>0\mid a_l\neq 0\}.$$
Then
$$(\ad\,D)^{j-1}(v)=\frac{(j+1)!}{2}a_j\,x^2D+j!\,a_{j-1}\,xD+(j-1)!\,a_{j-2}\,D\in \sigma(B),$$
$$(\ad\,D)^j(v)=(j+1)!\,a_j\,xD+j!\,a_{j-1}\,D\in \sigma(B).$$
Therefore, $\sigma(B)$ contains a semisimple subalgebra span$_{\mathbb{F}}\{D,xD,x^{2}D\}\cong\sss\lll_2$. It contradicts
with the solvability of the subalgebra $\sigma(B)$.

(i-2) Suppose $u\in \ggg_1$, then we will get a contradiction. Since $B\nsubseteqq\ggg_0$, there exists $u^{\prime}\in B\setminus \ggg_0$. 
According to \cite{De},
we can find $\sigma\in G$ such that $\sigma(u^{\prime})=D+c\,x^{p-1}D$ for some $c\in\mathbb{F}$. Write
$$\sigma(u)=\sum\limits_{j=i}^{p-2}k_jx^{j+1}D\,\,\text{for}\,\,\text{some}\,\,i\geq 1,\,\,\text{and}\,\,k_i\neq 0.$$
Then
$$\big(\ad(\sigma(u^{\prime}))\big)^{i-1}\big(\sigma(u)\big)=\frac{(i+1)!}{2}\,k_i\,x^2D+w_2\in\sigma(B),$$
and
$$\big(\ad(\sigma(u^{\prime}))\big)^{i}\big(\sigma(u)\big)=(i+1)!\,k_i\,xD+w_1\in\sigma(B),$$
where $w_1\in\ggg_1,\,w_2\in\ggg_2$. Hence, $\sigma(B)$ contains a subalgebra
$$\text{span}_{\mathbb{F}}\{D+c\,x^{p-1}D,\,\,
(i+1)!\,k_i\,xD+w_1,\,\, \frac{(i+1)!}{2}\,k_i\,x^2D+w_2\}$$
which is not solvable by a direct check. This contradicts
with the solvability of the subalgebra $\sigma(B)$.

(ii) $B\cap \N=0$.

In this situation, $B\cap \ggg_1=0$, so that $\dim\,B\leq \dim\,\ggg-\dim\,\ggg_1=2$.

(ii-1) If $\dim\,B=1$. We can write $B=\text{span}_{\mathbb{F}}\{X\}$, where $X\in\ggg\setminus\ggg_0$. According to \cite[Theorem 2]{Pr-2},
$X^{[p]}=cX$ for some $c\in\mathbb{F}$. Since $B\cap {\N}=0$, we get $c\neq 0$, i.e., $X$ is a semisimple element. It follows that
$\ggg$ can be decomposed as a direct sum of eigensubspaces of $\ad\,X$. Take any eigenvector $Y\neq X$, then $\text{span}_{\mathbb{F}}\{X,\,Y\}$
is a two-dimensional solvable subalgebra containing $B$. It contradicts with the maximality of $B$ as a solvable subalgebra.

(ii-2) If $\dim\,B =2$. By the structure of two-dimensional Lie algebras and the assumption above, we can choose a basis $\{X, Y\}$ of $B$ with
$$X=D+\sum\limits_{i=0}^{p-2}k_i\,x^{i+1}D,\,\,Y=xD+\sum\limits_{i=1}^{p-2}l_i\,x^{i+1}D,\,\,\text{and}\,\,[X,Y]=X.$$
A similar argument as part (i) in the proof of Proposition \ref{prop-1} yields $X\in {\N}$. It contradicts with the assumption
$B\cap \N=0$.

In conclusion, $B$ is conjugate to $\mathscr{B}^+$ or $\mathscr{B}^{-}$. More precisely, If $B\subseteq\ggg_0$, then $B$ is conjugate to $\mathscr{B}^+$.
While if $B\nsubseteqq \ggg_0$, then $B$ is conjugate to $\mathscr{B}^-$. This completes the proof.
\end{proof}

\begin{rem}
The restriction on the characteristic $p$ of the base field $\mathbb{F}$ is necessary. Indeed, Theorem \ref{thm-2} is not valid for $p=2, 3$.
When $p=2$, the Witt algebra $W_1$ is a solvable Lie algebra. Hence, there is only one Borel subalgebra which is just $W_1$ itself. When
$p=3$, the Witt algebra $W_1\cong\sss\lll_2$. Classical result implies that all Borel subalgebras are conjugate.
\end{rem}

The non-classical Lie algebra of rank 1, i.e., the Witt algebra $W_1$, can be generalized to higher rank $n$, i.e., the
Jacobson-Witt algebra $W_n$. Let ${\mathfrak{A}}_n={\mathbb{F}}[x_1,\cdots, x_n]/(x_1^p,\cdots, x_n^{p})$ be the truncated polynomial algebra
 of $n$ indeterminates. By definition, the Jacobson-Witt algebra $W_n$ is the derivation algebra of ${\mathfrak{A}}_n$. Let $D_i\,(1\leq i\leq n)$ be the partial derivation with respect to the $i$-th indeterminate defined via $D_i(x_j)=\delta_{ij}$ for $1\leq i,j\leq n$. Then by \cite[\S\,4.2]{SF},
$W_n=\text{span}_{\mathbb{F}}\{\sum\limits_{i=1}^n f_iD_i\mid f_i\in {\mathfrak{A}}_n\}.$
According to \cite{De}, under the automorphism group $G=\Aut W_n$ of $W_n$, any Cartan subalgebra in $W_n$ is conjugate to one of the following subalgebras.
\begin{eqnarray*}
T_0&=&\text{span}_{{\mathbb{F}}}\{(1+x_1)D_1,\,(1+x_2)D_2,\,\cdots,(1+x_n)D_n\}\\
T_1&=&\text{span}_{{\mathbb{F}}}\{x_1D_1,\,(1+x_2)D_2,\,\cdots,(1+x_n)D_n\}\\
\cdots&\cdots&\cdots\cdots\cdots\cdots\cdots\cdots\cdots\cdots\cdots\cdots\cdots\cdots\\
T_{n-1}&=&\text{span}_{{\mathbb{F}}}\{x_1D_1,\, x_2D_2,\,\cdots,x_{n-1}D_{n-1},\,(1+x_n)D_n\}\\
T_n&=&\text{span}_{{\mathbb{F}}}\{x_1D_1,\,x_2D_2,\,\cdots,x_nD_n\}\\
\end{eqnarray*}

One can expect the intimate relation between Cartan subalgebras and Borel subalgebras. We conclude this section with the following conjecture.

\begin{con}\label{conj}
Let $\ggg=W_n$ be the Jacobson-Witt algebra of rank $n$. Then there are $n+1$ conjugacy classes of Borel subalgebras in $\ggg$ under the
action of the automorphism group of $\ggg$.
\end{con}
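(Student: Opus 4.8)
The plan is to prove the conjecture by setting up a correspondence between the $n+1$ conjugacy classes of Cartan subalgebras $T_0,\dots,T_n$ of $\ggg=W_n$ recalled above and the conjugacy classes of Borel subalgebras, the guiding principle being that a Borel subalgebra should be determined, up to conjugacy, by the maximal torus it contains. First I would attach to each $T_k$ a standard Borel subalgebra $\mathscr{B}_k$ as follows. The torus $T_k$ is $n$-dimensional and consists of commuting $[p]$-semisimple elements, so $\ad T_k$ acts semisimply and yields a weight decomposition $\ggg=\bigoplus_{\alpha}\ggg^{\alpha}$ with weights $\alpha$ in the character group $\cong \mathbb{F}_p^{\,n}$ of $T_k$. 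I would define $\mathscr{B}_k$ to be $T_k$ together with the weight spaces for a suitably chosen set of weights so that $\mathscr{B}_k$ is solvable and maximal; for $k=n$ this should recover the ``positive'' Borel $\mathfrak{b}(\mathfrak{gl}_n)\oplus\ggg_1$ lying inside $\ggg_0$, and for $n=1$ it should reproduce the two algebras $\mathscr{B}_1=\mathscr{B}^{+}$ and $\mathscr{B}_0=\mathscr{B}^{-}$ of Theorem \ref{thm-2}. Verifying that each $\mathscr{B}_k$ is genuinely a maximal solvable subalgebra is a finite bracket computation within the weight grading.

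For exhaustiveness, let $B$ be an arbitrary Borel subalgebra. The first step is to show that $B$ contains a maximal torus $T$ of the restricted Lie algebra $\ggg$; maximality of $B$ as a solvable subalgebra should force such a torus to be present, since a purely ``unipotent'' solvable subalgebra like $\ggg_1$ is never maximal. By \cite{De} the torus $T$ is conjugate to exactly one $T_k$, so after conjugating we may assume $T_k\subseteq B$. Since $T_k$ acts semisimply and $B$ is $\ad T_k$-stable, $B$ is graded by the $T_k$-weights, $B=\bigoplus_{\alpha}(B\cap\ggg^{\alpha})$. The remaining task is then to prove that, among all $T_k$-graded solvable subalgebras containing $T_k$, the maximal ones form a single orbit under the normalizer $N_G(T_k)$, which must therefore coincide with $\mathscr{B}_k$ up to conjugacy. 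I would approach this by analyzing the bracket relations among the weight spaces $\ggg^{\alpha}$ — the ``generalized root system'' of $W_n$ relative to $T_k$ — showing that solvability forbids the $\sss\lll_2$-type weight configurations exploited in the proof of Theorem \ref{thm-2}, while maximality pins $B$ down to the largest admissible collection of weight spaces.

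Distinctness of the $n+1$ classes would follow from the invariance of the attached torus: maximal tori of a solvable restricted Lie algebra are conjugate within it, so the conjugacy class of the maximal torus inside $B$ is an invariant of $B$, and the $T_k$ are pairwise non-conjugate in $G$. Concretely this is detected by $k=\dim(T_k\cap\ggg_0)$, where $\ggg_0$ is the (automorphism-invariant) subalgebra of minimal codimension $n$; since this integer is a conjugacy invariant, the $\mathscr{B}_k$ are mutually non-conjugate, giving exactly $n+1$ classes.

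The \textbf{main obstacle} is the second step. Unlike the classical situation, the $T_k$-weights lie in $\mathbb{F}_p^{\,n}$, which carries no intrinsic notion of positivity and no Weyl group acting simply transitively on positive systems, so the classical theorem that all Borels containing a fixed torus are conjugate is simply unavailable. One must instead control directly the large, $p$-dependent weight spaces of $W_n$ and their brackets, and establish the required $N_G(T_k)$-conjugacy of maximal solvable subalgebras by hand; this combinatorial difficulty is presumably why the statement remains a conjecture. A secondary obstacle appears already in the first step: in the modular setting one must check that every maximal solvable subalgebra contains a maximal torus and that its maximal tori are mutually conjugate, neither of which is automatic. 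An alternative, more computational route would imitate the proof of Theorem \ref{thm-2} via an explicit description of the nilpotent cone of $W_n$ together with $\sss\lll_2$-obstructions, using $n=1$ as the base case of an induction on the rank; but the nilpotent geometry of $W_n$ for $n>1$ is far more intricate, so I expect the torus-theoretic approach to be the cleaner one.
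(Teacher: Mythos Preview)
The statement you are addressing is a \emph{conjecture} in the paper, not a theorem: the authors offer no proof of it. The only supporting evidence they give is the remark that Theorem~\ref{thm-2} verifies the case $n=1$. So there is no ``paper's own proof'' to compare against.

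Your proposal is therefore not a reconstruction of an existing argument but a genuine attempt at an open problem, and you yourself identify the gap accurately. The torus-theoretic strategy is natural, and the construction of candidate Borels $\mathscr{B}_k$ attached to the Demushkin tori $T_k$, together with the distinctness argument via the invariant $k=\dim(T_k\cap\ggg_0)$, is sound in outline. But the crucial step --- showing that every Borel containing $T_k$ is $N_G(T_k)$-conjugate to a single standard one --- is exactly where the argument has no content yet: you describe what would need to be checked (transitivity on maximal $T_k$-graded solvable subalgebras) without supplying any mechanism for it. In the classical case this is handled by the Weyl group acting simply transitively on positive systems, and as you note there is no such structure for weights in $\mathbb{F}_p^{\,n}$. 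Your secondary worry is also real: it is not automatic in positive characteristic that a maximal solvable subalgebra of a restricted Lie algebra contains a maximal torus, and in fact for $n=1$ the paper's proof of Theorem~\ref{thm-2} does \emph{not} proceed that way --- it argues case by case via the nilpotent cone and $\mathfrak{sl}_2$-obstructions rather than by first locating a torus in $B$. So while your outline is a reasonable programme, it does not constitute a proof, and the paper does not claim one either.
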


\begin{rem}
Conjecture \ref{conj} is true for the Witt algebra $W_1$ by Theorem \ref{thm-2}.
\end{rem}

\vspace{0.5cm}

\subsection*{Acknowledgements}
This work was done during the visit of the authors to the department of mathematics in the University
of Kiel in 2012-2013. The authors would like to express their sincere gratitude to professor Rolf Farnsteiner for his invitation, hospitality and stimulating discussions. The authors also thank the department of mathematics for providing an excellent atmosphere. Many thanks are also given to professor Bin Shu who stimulated us to consider Borel subalgebras of a general Lie algebra during
the summer of 2012 in Shanghai.

\vspace{0.5cm}

\end{document}